\documentclass[11pt]{amsart}
\openup 2pt
\usepackage{amssymb,amsmath}

\relpenalty=10000
\binoppenalty=10000
\uchyph=-1
\flushbottom
\topmargin 5pt
\headsep 15pt
\oddsidemargin 0pt
\evensidemargin 0pt
\textheight 9.1in
\textwidth 6.5in

\makeatletter
\theoremstyle{plain}
 \newtheorem{thm}{Theorem}[section]
\newtheorem{thm*}{Theorem}
 \newtheorem{lem}[thm]{Lemma}
 \newtheorem{prop}[thm]{Proposition}
 
 \numberwithin{equation}{section} 
\numberwithin{figure}{section} 
 \theoremstyle{plain}
 \theoremstyle{definition}
 
 \newtheorem{rem}[thm]{Remark}

\newcommand{\fC}{{{\mathfrak C}}}

\newcommand{\C}{{{\mathbb C}}}

\newcommand{\R}{{{\mathbb R}}}

\newcommand{\bH}{{{\bf H}}}

\newcommand{\bz}{{{\bf z}}}
\newcommand{\bv}{{{\bf v}}}
\newcommand{\bw}{{{\bf w}}}

\newcommand{\cR}{{{\mathcal R}}}
\newcommand{\fp}{{{\mathfrak p}}}
\newcommand{\fq}{{{\mathfrak q}}}

\newcommand{\X}{{{\mathbb X}}}

\newcommand{\fH}{{{\mathfrak H}}}

\newcommand{\cV}{{{\mathcal V}}}

\newcommand{\binfty}{{{\bf \infty}}}

\makeatother

\date{\today\\
2010 \emph{Mathematics Subject Classifications.} 53C17, 51M10, 51F99.\\
\emph{Key words.} Complex hyperbolic plane, Cygan metric, Ptolemaean inequality, Ptolemaeus' Theorem.}
\begin{document}

\title[Ptolemaean Inequality]{The Ptolemaean Inequality in the closure of complex hyperbolic plane}

\author[I.d. platis \& Nilg\"un S\"onmez]{Ioannis D. Platis \& Nilg\"un S\"onmez}

\email{jplatis@math.uoc.gr, nceylan@aku.edu.tr }
\address{Department of Mathematics and Applied Mathematics\\ 
University of Crete\\
University Campus\\
GR 700 13 Voutes\\ Heraklion Crete\\Greece}

\bigskip

\address{Department of Mathematics\\
Afyon Kocatepe University\\
Gazlig\"ol Yolu Rekt\"orl\"uk E Blok  \\
TR 03200 Afyon Karahisar\\Turkey}

\begin{abstract}
We prove Ptolemaean Inequality and Ptolemaeus' Theorem in the closure complex hyperbolic plane endowed with the Cygan metric.
\end{abstract}

\maketitle

\section{Introduction}
Let $(S,\rho)$ be a (semi-)metric space. The (semi-)metric $\rho$ is called {\it Ptolemaean}  if any four distinct points $p_1$, $p_2$, $p_3$ and $p_4$ in $S$ satisfy the {\it Ptolemaean Inequality}; that is, for any permutation $(i,j,k,l)$ in the permutation group $S_4$  we have
$$
\rho(p_i,p_k)\cdot \rho(p_j,p_l)\le \rho(p_i,p_j)\cdot \rho(p_k,p_l)+\rho(p_j,p_k)\cdot \rho(p_l,p_i).
$$
A subset $\sigma$ of  $S$ is called a {\it Ptolemaean circle} if for any four distinct points $p_1$, $p_2$, $p_3$ and $p_4$ in $\sigma$ such that $p_1$ and $p_3$ separate $p_2$ and $p_4$ we have
$$
\rho(p_1,p_3)\cdot \rho(p_2,p_4)=\rho(p_1,p_2)\cdot \rho(p_3,p_4)+\rho(p_2,p_3)\cdot \rho(p_4,p_1).
 $$ 
Then we say that $\sigma$ satisfies the {\it Theorem of Ptolemaeus}. The prototype of course is Euclidean plane case, which was proved by the Ancient Greek mathematician Claudius Ptolemaeus (Ptolemy) of Alexandria almost 1800 years ago: Inequality holds for any four points of the Euclidean plane and Ptolemaean circles are Euclidean circles. From the times of antiquity it was realised that even in the simple Euclidean case, Ptolemaean Inequality has an intrinsic importance of its own and various generalisations have been given by a variety of researchers since then. In particular, generalisations to much more abstract spaces have appeard in the last 70 years. Illustratively, see \cite{BFW} and \cite{FS} for the case of ${\rm CAT}(0)$ and ${\rm CAT}(-1)$ cases, respectively, as well as \cite{FLS} for the case of spaces of non-positive curvature, \cite{K} for the case of Hilbert geometries and \cite{S} for normed spaces.  

In this paper we investigate the Ptolemaean Inequality and the Theorem of Ptolemaeus in the metric space $(\overline{\bH^2_\C},\rho)$, where $\overline{\bH^2_\C}$ is the compactified complex hyperbolic plane and $\rho$ is the Cygan metric, see Section \ref{sec:prel} for the definitions. Working in a much more general concept, the first author showed in \cite{P1} that both Ptolemaean Inequality and Theorem of Ptolemaeus hold in the boundary of complex hyperbolic plane $\partial\bH^2_\C$, when the latter is endowed with the Kor\'anyi-Cygan metric $d_\fH$, see Section \ref{sec:chpbd}.  The boundary of complex hyperbolic plane is a sphere and the Cygan metric $\rho$ is the natural extension of $d_\fH$ in the whole sphere.
For the proof we use metric cross-ratios and their properties, see Section \ref{sec:cross}. Using cross-ratios, both Ptolemaean Inequality and Theorem of Ptolemaus can be expressed in quite a neat way.

This paper is organised as follows: In the preliminary Section \ref{sec:prel} we state well known facts about complex hyperbolic plane, its boundary, horospherical coordinates and Cygan metric. In Section \ref{sec:ptol-ineq} we prove Ptolemaean Inequality (Theorem \ref{thm:ptol-ineq}) and finally in Section \ref{sec:ptol-thm} we prove Ptolemaeus' Theorem. Both proofs are carried out via metric cross-ratio in $(\overline{\bH^2_\C},\rho)$ and its invariance properties.

\section{Preliminaries}\label{sec:prel}
The material in this section is well known; for further details we refer to \cite{G}, \cite{P}. In Section \ref{sec:chp} we define complex hyperbolic plane and describe its isometries with respect to the Bergman metric. Section \ref{sec:chpbd} is devoted to the boundary of complex hyperbolic plane; in particular, we define the Heisenberg group and the Heisenberg (Kor\'anyi-Cygan) metric. Horospherical coordinates for complex hyperbolic plane are described in Section \ref{sec:horo} and finally, Cygan metric is in Section \ref{sec:cyg}.
\subsection{Complex hyperbolic plane}\label{sec:chp}
Let $\C^{2,1}$ be the vector space $\C^{3}$ with
the Hermitian form of signature $(2,1)$ given by
$$
\left\langle {\bf {z}},{\bf {w}}\right\rangle 
={\bf w}^*J{\bf z}
=\overline{w}_{3}z_{1}+\overline{w}_{2}z_{2}+\overline{w}_{1}z_{3},
$$
where $J$ is the matrix of the Hermitian form: 
$$
J=\left[\begin{array}{ccc}
0 & 0 & 1\\
0 & 1 & 0\\
1 & 0 & 0\end{array}\right].
$$
We consider the following subspaces of ${\C}^{2,1}$:
\begin{eqnarray*}
V_- & = & \Bigl\{{\bf z}\in\C^{2,1}\ :\ 
\langle{\bf z},\,{\bf z} \rangle<0\Bigr\}, \\
V_0 & = & \Bigl\{{\bf z}\in\C^{2,1}\setminus\{{\bf 0}\}\ :\ 
\langle{\bf z},\,{\bf z} \rangle=0\Bigr\}.
\end{eqnarray*}
Let ${\mathbb P}:\C^{2,1}\setminus\{{\bf 0}\}\longrightarrow \C P^2$ be
the canonical projection onto the two-dimensional complex projective space. Then 
{\sl complex hyperbolic plane} ${\bf H}_{\C}^{2}$
is defined to be ${\mathbb P}V_-$ and its boundary
$\partial{\bf H}^2_{\C}$ is ${\mathbb P}V_0$.
Specifically, $\C^{2,1}\setminus\{{\bf 0}\}$ may be covered with three charts
$H_1,H_2,H_3$ where $H_j$ comprises those points in 
$\C^{2,1}\setminus\{{\bf 0}\}$ for which $z_j\neq 0$. It is clear that
$V_-$ is contained in $H_{3}$. The canonical projection
from $H_{3}$ to $\C^n$ is given by 
${\mathbb P}({\bf z})=(z_1z_{n3}^{-1},\,z_2z_{3}^{-1})=z$. Therefore we can write
${\bf H}^n_{\C}={\mathbb P}(V_-)$ as
$$
{\bf H}^2_{\C} = \left\{ (z_1,\,z_2,\,z_3)\in{\C}^3
\ : \ 2\Re(z_1)+|z_2|^2<0\right\},
$$
which is called the {\it Siegel domain model for} ${\bf H}^2_{\C}$. 
There are distinguished points in $V_0$ which we denote by 
${\bf o}$ and $\binfty$:
$$
{\bf o}=\left[\begin{matrix}  0\\ 0\\  1 \end{matrix}\right], \quad
\binfty=\left[\begin{matrix} 1 \\0 \\ 0 \end{matrix}\right].
$$
Then $V_0\setminus\{{\binfty}\}$ is contained in $H_{3}$ and 
$V_0\setminus\{{\bf o}\}$ (in particular $\infty$) is contained
in $H_1$. Let ${\mathbb P}{\bf o}=o$ and ${\mathbb P}\binfty=\infty$. 
Then we can write $\partial{\bf H}^2_{\C}={\mathbb P}(V_0)$ as
$$
\partial{\bf H}^2_{\C}\setminus\{\infty\} 
=\left\{ (z_1,\,z_2,\,z_3)\in{\C}^n
\ : \ 2\Re(z_1)+|z_2|^2=0\right\}.
$$
In particular $o=(0,0)\in\C^2$.

Conversely, given a point $z$ of 
${\C}^2={\mathbb P}(H_{3})\subset\C P^2$ we may 
lift $z=(z_1,z_2)$ to a point ${\bf z}$ in $H_{3}\subset\C^{2,1}$, 
called the {\sl standard lift} of $z$, by writing ${\bf z}$ in non-homogeneous
coordinates as
$$
{\bf z}=\left[\begin{matrix} z_1 \\ z_2 \\ 1 \end{matrix}\right].
$$

The {\it Riemannian metric} on  ${\bf H}_{\C}^{2}$ is defined by the
distance function $\rho$ given by the formula
$$
\cosh^{2}\left(\frac{\rho(z,w)}{2}\right)
=\frac{\left\langle {\bf {z}},{\bf {w}}\right\rangle 
\left\langle {\bf {w}},{\bf {z}}\right\rangle }
{\left\langle {\bf {z}},{\bf {z}}\right\rangle 
\left\langle {\bf {w}},{\bf {w}}\right\rangle }
=\frac{\bigl|\langle {\bf z},{\bf w}\rangle\bigr|^2}
{|{\bf z}|^2|{\bf w}|^2},
$$
where ${\bf z}$ and ${\bf w}$ in $V_-$ are the standard lifts of $z$ and $w$ 
in ${\bf H}^2_{\C}$ and
$|{\bf z}|=\sqrt{-\langle{\bf z},{\bf z}\rangle}$.
Alternatively,
$$
ds^{2}=-\frac{4}{\left\langle {\bf {z}},{\bf {z}}\right\rangle ^{2}}
\det\left[\begin{array}{cc}
\left\langle {\bf {z}},{\bf {z}}\right\rangle  
& \left\langle d{\bf {z}},{\bf {z}}\right\rangle \\
\left\langle {\bf {z}},d{\bf {z}}\right\rangle  
& \left\langle d{\bf {z}},d{\bf {z}}\right\rangle \end{array}\right].
$$
The sectional curvature of 
$ {\bf H}_{\C}^{2}$ is  pinched between $-1$ and $-1/4$.
Also, $\bH^2_\C$ is a complex manifold, the metric is K\"ahler (in fact, it is the {\it Bergman metric}) and  the holomorphic sectional curvature equals to $-1$.

\medskip

Denote by ${\rm U}(2,1)$ the group
of unitary matrices for  the  Hermitian form 
$\left\langle \cdot,\cdot\right\rangle $. 
Each  matrix $A\in{\rm U}(2,1) $ satisfies
the relation $A^{-1}=JA^{*}J$ where $A^{*}$ is the Hermitian transpose of $A$.
The isometry group  of complex hyperbolic plane is  the {\it projective group} ${\rm PU(2,1)}$. Instead, we may use  ${\rm SU}(2,1)$,  which is a three-fold cover of ${\rm PU}(2,1)$.

Two kinds of subspaces of $\bH^2_\C$ are of particular interest, that is $\C$-lines and mainly $\R-$planes. 

A $\C$-{\it line} is an isometric image of the embedding of $\bH^1_\C=\{z\in\C\;|\;\Re(z)<0\}$ into $\bH^2_\C$. We may assume that the embedding is the standard one
$$
z\mapsto(z,0).
$$
The isometries preserving a $\C$-line is a subgroup isomorphic to ${\rm U}(1,1)$.

An $\R$-{\it plane} (or {\it Lagrangian plane}) $\cR$ is a real 2-dimensional subspace of $\bH^2_\C$ characterised by $\langle \bv,\bw\rangle\in\R$ for all $\bv,\bw\in\cR$. Any real plane $\cR$ is  the isometric image 
of an embedded copy of $\bH_\R^2=\{(x_1,x_2)\in\R^2\;:$ $\;2x_1+x_2^2<0\}$ into $\bH^2_\C$; here, we may assume that the embedding is the standard one:
$$
(x_1,x_2)\mapsto(x_1,x_2,0).
$$
The isometries preserving the $\R$-plane above is a subgroup isomorphic to ${\rm PO}(2,1)$.

\subsection{The boundary: Heisenberg group}\label{sec:chpbd}
A finite point $z$ is in the boundary of the Siegel domain if its standard
lift to $\C^{2,1}$ is 
$$
{\bf z}=\left[\begin{matrix} z_1 \\ z_2 \\ 1 \end{matrix}\right],
\quad \text{ where }\quad 2\Re(z_1) + |z_2|^2 = 0.
$$
We set $$
\zeta=z_{2}/\sqrt{2},\quad \zeta=z_2\in\C.
$$ 
Therefore
$$
{\bf z}
=\left[\begin{matrix} -|\zeta|^2+iv \\ \sqrt{2}\zeta \\1\end{matrix}\right].
$$
In this way, the boundary of the Siegel is identified to the one point compactification of $\C\times\R$, that is the sphere $S^3$.
The action of the stabiliser of infinity ${\rm Stab}(\infty)$ gives this set the structure of a  group which we shall denote by $\fH$. The multiplication for $\fH$ is
$$
(\zeta,v)*(\zeta',v')=\left(\zeta+\zeta',v+v'+2\Im(\overline{\zeta'}\zeta)\right).
$$
We call $\fH$ the {\it Heisenberg group}; $\partial\bH^2_\C=\fH\cup\{\infty\}$.
The {\it Kor\'anyi gauge} $|\cdot|_\fH$ defined on $\fH$ is given by
$$
\left|(\zeta,v)\right|_\fH=\left|-|\zeta|^2+v\right|^{1/2},
$$
where on the right hand side we have the Euclidean norm. Notice that $|\cdot|_\fH$ is not a norm in the usual sense; however, from this gauge we obtain  a metric on $\fH$, the {\it Heisenberg metric  $d_\fH$}, which is defined by the relation
$$
d_\fH\left((\zeta,v),\,((\zeta',v')\right)
=\left|((\zeta,v)^{-1}*((\zeta',v')\right|_\fH.
$$
We remark that the Heisenberg metric is not a path metric.
By taking the standard lift of points on $\partial{\bf H}^2_\C\setminus\{\infty\}$ 
to $\C^{2,1}$ we can write the  metric $d_\fH$ as:
$$
d_\fH\left((\zeta,v),\,(\zeta',v')\right)
=\left|\left\langle\left[ \begin{matrix}
-|\zeta|^2+v \\ \sqrt{2}\zeta\\1 \end{matrix}\right],\,
\left[ \begin{matrix}
-|\zeta'|^2+v' \\  \sqrt{2}\zeta' \\ 1 \end{matrix}\right]
\right\rangle\right|^{1/2}.
$$
The  metric $d_\fH$ is invariant under {\it left translations} and {\it rotations about the axis} $\cV=\{0\}\times\R$. Left translations are essentially the left action of $\fH$ on itself: Given a point $(\zeta',v')\in\fH$ we define
$$
T_{(\zeta',v')}(\zeta,v)=(\zeta',v')*(\zeta,v).
$$
Rotations come from the action of ${\rm U}(1)$ on $\C$: Given a $\theta\in\R$ we define
$$
R_\theta(\zeta,v)=(e^{i\theta}\zeta,v).
$$
These actions form the group ${\rm Isom}^+(\fH,d_\fH)$ of $d_\fH$-{\it orientation-preserving isometries}; this acts transitively on $\fH$. The full group of $d_\fH$-isometries comprises orientation-preserving isometries followed by the conjugation $(\zeta,v)\mapsto(\overline{\zeta},-v)$, Note also that the stabiliser of $0$ consists of rotations. All the above transformations are extended naturally (and uniquely) on the boundary $\partial\bH^2_\C$, by requiring the extended transformations to map $\infty$ to itself.

Now, if $\delta\in\R_*^+$ we define
$$
D_\delta(\zeta,v)=(\delta\zeta,\delta^2v),\quad D_\delta(\infty)=\infty.
$$
We call the map $D_\delta$ a {\it dilation}. It is clear that for every $(\zeta,v),(\zeta',v')\in\partial\bH_\C^2$ we have
$$
d_\fH\left(D_\delta(\zeta,v),D_\delta(\zeta',v')\right)=\delta\;d_\fH\left((\zeta,v),(\zeta',v')\right).
$$
Therefore $d_\fH$ is  scaled up to multiplicative constants by the action of dilations. The {\it orientation-preserving similarity group (resp. similarity group)} ${\rm Sim}^+(\fH,d_\fH)$ (resp. ${\rm Sim}(\fH,d_\fH)$) is the group comprising orientation-preserving Heisenberg isometries (resp. Heisenberg isometries) and dilations.
Finally, {\it inversion} $I$ is given by
$$
I(\zeta,v)=\left(\zeta(-|\zeta|^2+iv)^{-1}\;,\;\-v\left|-|\zeta|^2+iv\right|^{-2}\right),\;\;\text{if}\;(\zeta,v)\neq o,\infty,\;\quad I(o)=\infty,\;I(\infty)=o.
$$
Inversion $I$ is an involution of $\partial\bH^2_\C$. Moreover, for all $p=(\zeta,v),p'=(\zeta',v')\in\fH\setminus\{o\}$ we have
$$
d_\fH(I(p),o)=\frac{1}{d_\fH(p,o)},\quad d_\fH(I(p),I(p'))=\frac{d_\fH(p,p')}{d_\fH(p,o)\cdot d_\fH(o,p')}.
$$
The group generated from orientation-preserving similarities and inversion is isomorphic to the group ${\rm PU}(2,1)$ of holomorphic isometries of $\bH^2_\C$ with respect to the Bergman metric; each holomorphic isometry can be written as a composition of orientation-preserving similarities and inversion. Given two distinct points on the boundary, we can find an element of  ${\rm PU}(2,1)$  mapping those points to $0$ and $\infty$ respectively; in particular, ${\rm PU}(2,1)$ acts doubly transitively on the boundary. 

\subsection{Horospherical coordinates}\label{sec:horo}
For a fixed $u\in\R^+$ consider all those points $z\in\bH^2_\C$ for which the standard lift $\bz$ satisfies
$\langle\bz, \bz\rangle = −2u$. Equivalently
$$
\bz=\left[\begin{matrix} z_1\\z_2\\1
\end{matrix}\right],\quad
\text{where}\;\; 2\Re(z_1)+|z_2 |^2 = −2u.
$$
By writing again $z_2 = 2\zeta$ we have
$z_1 = −|\zeta|^2 − u + iv$. Thus $z$ corresponds to a point $(\zeta, v, u)\in$ $\C\times\R\times\R^+$:
$$
\bz=
\left[\begin{matrix}−|\zeta|^2-u + iv\\
\sqrt{2}\zeta\\
1
\end{matrix}\right].
$$
Let $H_u$ denote the set of points in $\bH^2_\C$ with $\langle\bz, \bz\rangle = −2u$. This set is called the {\it horosphere
of height} $u$. Clearly $H_u$  carries the structure of the Heisenberg group: A point $z$ in the Siegel domain is consequently canonically identified to $(\zeta, v, u)\in \fH\times\R^+$;
we call $(\zeta, v, u)$ the {\it horospherical coordinates of $z$}. The set of the finite boundary points is the horosphere of height zero: $H_0 = \partial\bH^2_\C\setminus\{\infty\}\simeq\fH$ where the isomorphism is $$\fH\ni(\zeta, v) \mapsto (\zeta, v, 0)\in H_0.$$ Explicitly, we have the following transformations of $H_u$:
\begin{enumerate}
\item Left
Heisenberg translation by $(\zeta', v')$ is given by
\begin{equation}\label{eq:Tu}
T^u_{(\zeta', v')}:(\zeta, v, u)\mapsto(\zeta + \zeta', v + v' + 2\Im(\zeta'\overline{\zeta}), u).
\end{equation}
\item Rotation in an angle $\theta$ is given by
\begin{equation}\label{eq:Ru}
R_\theta^u:(\zeta, v, u)\mapsto(e^{i\theta}\zeta, v,u)
\end{equation}
\item Dilation by $\delta>0$ is given by
\begin{equation}\label{eq:Du}
D_\delta^u:(\zeta, v, u)\mapsto(\delta\zeta,\delta^2v,u).
\end{equation}
\item Inversion $I^u$ is given by
\begin{equation}\label{eq:Iu}
I^u:(\zeta, v, u)\mapsto\left(I(\zeta,v),u\right).
\end{equation}
\item Conjugation $J^u$ is given by
\begin{equation}\label{eq:Ju}
J^u:(\zeta, v, u)\mapsto\left(J(\zeta,v),u\right). 
 \end{equation}
\end{enumerate}
The group $G_u$ comprising compositions of transformations (\ref{eq:Tu})-(\ref{eq:Iu}) is thus a group isomorphic to ${\rm PU}(2,1)$. Any two points $p,q\in H_u$ may be mapped to $\infty, (0,0,u)$, respectively, by an element of $G_u$; therefore $G_u$ acts doubly transitively on $H_u$. The group $\overline{G_u}$ comprises elements of $G_u$ followed by conjugation $J^u$ and is a group isomorphic to $\overline{{\rm PU}(2,1)}$, that is, the group comprising elements of ${\rm PU}(2,1)$ followed by $J$.
\begin{rem}
Two horospheres of strictly positive height $u$ and $u'$ may be mapped onto one another via an element of ${\rm PU}(2,1)$: If with no loss of generality we suppose that $u<u'$, then $D_\delta(H_u)=(H_{u'})$, where $\delta=(u'/u)^{1/2}$. Of course, there is no element of ${\rm PU}(2,1)$ mapping a horosphere of positive height $H_u$ to $H_0$, ${\rm PU}(2,1)$ preserves $V_-$ and $V_0$.
\end{rem}
The {\it horoball $U_u$ of height $u$} is the union of all horospheres of height $t > u$; it is an open topological ball of dimension 4. The complex hyperbolic plane is thus a horoball itself, that is the horoball $U_0$ of height 0.

\subsection{The Cygan metric}\label{sec:cyg}
The {\it Cygan metric} on $\overline{\bH^2_\C}\setminus\{\infty\}$ is an extension of the Heisenberg metric $d_\fH$ on $\fH$ to an incomplete metric $\rho$; this is done by defining
$$
\rho\left((\zeta_1 , v_1 , u_1 ), (\zeta_2 , v_2 , u_2 )\right) = \left||\zeta_1-\zeta_2 |^2 + |u_1-u_2 |-iv_1 + iv_2- 2i\Im(\zeta_1{\overline \zeta_ 2})\right|^{1/2}.
$$
We stress here that this agrees with $\langle\bz_1 , \bz_2\rangle$
if and only if one (or both) of $\bz_1$ or $\bz_2$ is a null vector, that is, it corresponds to a point at the boundary. It is absolutely clear that $\rho$ satisfies the following: If $p=(q,u)=(\zeta,v,u)$ and $p'=(q',u')=(\zeta',v',u')$ then
\begin{enumerate}
\item $\rho(p,p')\ge 0$ and $\rho(p,p')=0$ if and only if $p=p'$;
\item $\rho(p,p')=\rho(p',p)$.
\end{enumerate}
Note also that if $p,p'$ lie in the same horosphere $H_u$ if and only if
$$
\rho(p,p')=d_\fH\left((\zeta,v),(\zeta',v')\right).
$$
It follows that when we consider the restriction $\rho_u$ of $\rho$ on a horosphere $H_u$, then the orientation-preserving isometries of $\rho_u$ are compositions of transformations (\ref{eq:Tu}) and (\ref{eq:Ru}), the orientation-preserving similarities are compositions of transformations (\ref{eq:Tu}), (\ref{eq:Ru}) and (\ref{eq:Du}) and the full group of similarities comprising orientation-preserving similarities followed by conjugation (\ref{eq:Ju}).

It remains to show that $\rho$ satisfies the triangle inequality; by showing this we shall also have a proof that the Heisenberg metric $d_\fH$ on $\fH$ is also a metric. 
\begin{prop}
 The function $\rho: \overline{\bH^2_\C}\setminus\{\infty\}\times\overline{\bH^2_\C}\setminus\{\infty\}\to\R^+$ satisfies the triangle inequality.
\end{prop}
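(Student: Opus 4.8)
The plan is to square the desired inequality and reduce everything to a single inequality between complex moduli. First I would write $\rho(p_j,p_k)^2 = |A_{jk}|$, where
$$
A_{jk} = |\zeta_j-\zeta_k|^2 + |u_j-u_k| - i(v_j-v_k) - 2i\Im(\zeta_j\overline{\zeta_k})
$$
is the complex number appearing under the modulus in the definition of $\rho$. Separating real and imaginary parts, I write $A_{jk} = R_{jk} - iS_{jk}$ with $R_{jk} = |\zeta_j-\zeta_k|^2 + |u_j-u_k| \ge 0$ and $S_{jk} = (v_j-v_k) + 2\Im(\zeta_j\overline{\zeta_k})$. One checks at once that $R_{kj}=R_{jk}$ and $S_{kj}=-S_{jk}$, so $A_{kj}=\overline{A_{jk}}$ and the whole expression is symmetric, consistent with the symmetry of $\rho$ noted above.

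Now fix three points $p_1,p_2,p_3$ and set $a=\zeta_1-\zeta_2$, $b=\zeta_2-\zeta_3$, so that $\zeta_1-\zeta_3=a+b$. The heart of the argument is a pair of relations comparing the ``$13$'' quantities with the ``$12$'' and ``$23$'' ones. Expanding $|a+b|^2 = |a|^2+|b|^2+2\Re(a\overline{b})$ and using the ordinary triangle inequality $|u_1-u_3|\le|u_1-u_2|+|u_2-u_3|$ on the real heights yields
$$
R_{13}\le R_{12}+R_{23}+2\Re(a\overline{b}).
$$
A direct computation of the imaginary parts — in which the terms $2\Im(\zeta_j\overline{\zeta_k})$ telescope and a real quantity $|\zeta_2|^2$ drops out under $\Im$ — gives the \emph{exact} identity
$$
S_{13} = S_{12}+S_{23}-2\Im(a\overline{b}).
$$
Deriving this clean identity, and recognizing that the height coordinates $u$ enter only the real part and only nonnegatively while the imaginary part obeys an exact telescoping law, is the key observation.

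With these two relations in hand I would introduce the auxiliary complex number $B := A_{12}+A_{23}+2a\overline{b}$. Its imaginary part equals $-S_{13}$ exactly, hence coincides with $\Im(A_{13})$, while its real part is $R_{12}+R_{23}+2\Re(a\overline{b})\ge R_{13}\ge 0$. Since two complex numbers with equal imaginary part and with real parts satisfying $0\le R_{13}\le\Re(B)$ are ordered in modulus according to their real parts, this gives $|A_{13}|\le|B|$. Finally, the complex triangle inequality bounds $|B|\le|A_{12}|+|A_{23}|+2|a||b|$, and the elementary estimate $|A_{jk}|\ge R_{jk}\ge|\zeta_j-\zeta_k|^2$ gives $|a||b|\le|A_{12}|^{1/2}|A_{23}|^{1/2}$. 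Combining these,
$$
|A_{13}|\le|A_{12}|+|A_{23}|+2|A_{12}|^{1/2}|A_{23}|^{1/2}=\left(|A_{12}|^{1/2}+|A_{23}|^{1/2}\right)^2,
$$
which upon taking square roots is precisely $\rho(p_1,p_3)\le\rho(p_1,p_2)+\rho(p_2,p_3)$.

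I expect the main obstacle to be not any individual estimate but the packaging in the third step: finding the right auxiliary number $B$ whose imaginary part matches that of $A_{13}$ exactly, so that the modulus comparison collapses to a one–variable monotonicity $\sqrt{x^2+y^2}\le\sqrt{x'^2+y^2}$ for $0\le x\le x'$. Once that reduction is set up, the height contribution is absorbed harmlessly into the real part and the remaining inequalities (complex triangle inequality and $|A_{jk}|\ge|\zeta_j-\zeta_k|^2$) are routine. Restricting to $u_1=u_2=u_3=0$ then simultaneously proves that $d_\fH$ is a genuine metric on $\fH$.
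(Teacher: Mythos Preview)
Your proof is correct and follows essentially the same chain of inequalities as the paper: monotonicity of $|z|$ in $\Re z$ for $\Re z\ge 0$ to absorb the ordinary triangle inequality on the heights $|u_j-u_k|$, then the complex triangle inequality, then the bound $|A_{jk}|\ge|\zeta_j-\zeta_k|^2$ to control the cross term. The only difference is packaging: the paper first invokes translation invariance of $\rho$ to normalise the middle point to $(0,0,u_3)$, which makes your telescoping identity for the imaginary parts automatic and obviates the auxiliary $B$; your direct treatment is equivalent, just with that normalisation unwound.
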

\begin{proof}
We remark first that if $p=(\zeta,v,u)$ and $q=(\zeta',v',u')$, then
$$
\rho(p,q)=\rho\left((T_{(-\zeta',-v')}(\zeta,u),v),(0,0,u')\right).
$$
According to the above remark it suffices to consider the points
$$
p_1=(\zeta_1,v_1,u_1),\quad p_3=(0,0,u_3),\quad p_2=(\zeta_2,v_2,u_2) 
$$
and show that
$$
\rho(p_1,p_2)\le\rho(p_1,p_3)+\rho(p_3,p_2).
$$
For this, we indeed have
\begin{eqnarray*}
\rho^2(p_1,p_2)&=&\left||\zeta_1-\zeta_2|^2+|u_1-u_2|-iv_1+iv_2-2i\Im(\zeta_1\overline{\zeta_2})\right|\\
&\le&\left||\zeta_1|^2+|\zeta_2|^2-2\Re(\zeta_1\overline{\zeta_2})+|u_1-u_3|+|u_2-u_3|-iv_1+iv_2-2i\Im(\zeta_1\overline{\zeta_2})\right|\\
&\le&\left||\zeta_1|^2+|u_1-u_3|-iv_1\right|+2|\zeta_1||\zeta_2|+\left||\zeta_2|^2+|u_2-u_3|-iv_2\right|\\
&\le&\left||\zeta_1|^2+|u_1-u_3|-iv_1\right|+2\left||\zeta_1|^2+|u_1-u_3|-iv_1\right|^{1/2}\left||\zeta_2|^2+|u_2-u_3|-iv_2\right|^{1/2}\\
&&+\left||\zeta_2|^2+|u_2-u_3|-iv_2\right|\\
&\le&\left(\rho(p_1,p_3)+\rho(p_2,p_3)\right)^2.
\end{eqnarray*}
\end{proof}
\begin{rem}\label{rem:triangle-eq}
It is quite useful to remark that in the above case triangle inequality holds as an equality if and only if
$$
u_1=u_2=u_3,\quad v_1=v_2=0,\quad \zeta_1,\zeta_2\in\R,\;\zeta_1\cdot\zeta_2\le 0.
$$
\end{rem}
The group of orientation-preserving similarities ${\rm Sim}^+_\rho$ of the Cygan metric is identified to the subgroup of ${\rm PU}(2,1)$ comprising elements which stabilise $\infty$. Explicitly, the orientation-preserving Cygan isometries comprise orientation-preserving Heisenberg isometries. that is, translations
$$
T_{(\zeta',v')}(\zeta,v,u)=\left(T_{(\zeta',v')}(\zeta,v),u\right),
$$
and rotations
$$
R_\theta(\zeta,v,u)=\left(R_\theta(\zeta,v),u\right).
$$
Notice that both translations and rotations preserve horospheres and their restrictions on each horosphere $H_u$ are $T^u$ and $R^\theta_u$, respectively. Now dilations $D_\delta$, $\delta>0$, are given by
$$
D_\delta(\zeta,v,u)=\left(D_\delta(\zeta,v),\delta^2\cdot u\right).
$$
The full group of Cygan similarities comprise orientation-preserving similarities followed by conjugation $J$, where
$$
J(\zeta,v,u)=(\overline{\zeta},-v,u).
$$
Again, the restriction of $J$ on an arbitrary horosphere $H_u$ is $J^u$. We finally discuss inversion $I$; this is given by
$$
I(\zeta,v,u)=\left(\frac{\zeta}{-|\zeta|^2+iv-u},\; -\frac{v}{\left|-|\zeta|^2+iv-u\right|^2},\;\frac{u}{\left|-|\zeta|^2+iv-u\right|^2} \right).
$$
Inversion $I$ is an involution of $\overline{\bH^2_\C}$; notice that $I(\infty)=o$. Moreover, for all $p=(\zeta,v,u),$ $p'=(\zeta',v',u')\in\overline{\bH^2_\C}\setminus\{o\}$ we have
$$
\rho(I(p),o)=\frac{1}{\rho(p,o)},\quad \rho(I(p),I(p'))=\frac{\rho(p,p')}{\rho(p,o)\;\rho(o,p')}.
$$
The above formula can be proved by carrying out straightforward calculations; however, it follows directly by a more general statement, see for instance Prop. 2.7 in \cite{P-S}. Note further that $I=I^0$ and the restriction of $I$ onto any other horosphere $H_u$, $u>0$, is {\it not} equal to $I^u$. Finally, we remark that inversion $I$ fixes the {\it unit Cygan sphere $S(0,1)$ centred at $o$}:
$$
S(0,1)=\{z=(\zeta,v,u)\;:\;\rho(z,o)=1\}.
$$

\section{Ptolemaean Inequality}\label{sec:ptol-ineq}
In this section we prove Ptolemaean Inequality for the compactified complex hyperbolic plane $\overline{\bH^2_\C}$ endowed with the Cygan metric. For the proof, we use the $\rho$-metric cross-ratio defined in Section \ref{sec:cross}. The Ptolemaean Inequality is then stated and proved (Theorem \ref{thm:ptol-ineq}) in Section \ref{sec:ptol1}.
\subsection{The metric cross-ratio} \label{sec:cross}
Let $\overline{\bH^2_\C}$ be the compactified hyperbolic plane; that is
$$
\overline{\bH^2_\C}=\bH^2_\C\cup\fH\cup\{\infty\}.
$$
We extend the Cygan metric $\rho$ in $\overline{\bH^2_\C}\setminus\{\infty\}$ into a metric  in $\overline{\bH^2_\C}$ which we will again denote by $\rho$, by requiring
$$
\rho(p,\infty)=+\infty,\;\;\text{if}\;\;p\neq\infty,\quad \rho(\infty,\infty)=0.
$$
Denote by $\fC(\overline{\bH^2_\C})$ the set of quadruples of pairwise distinct points of $\overline{\bH^2_\C}$, that is
$$
\fC(\overline{\bH^2_\C})=\left(\overline{\bH^2_\C}\right)^4\setminus\{\text{diagonals}\},
$$
and let $\fp=(p_1,p_2,$ $p_3,p_4)\in\fC(\overline{\bH^2_\C})$ be arbitrary. There are six distances in $(0,+\infty]$ involved:
$$
\rho(p_i,p_j),\quad i,j=1,\dots,4,\;\; i\neq j.
$$
We adopt the convention:
$
(+\infty):(+\infty)=1,
$
and to $\fp$ 
we associate the {\it cross-ratio $\X^\rho(\fp)$} defined by
\begin{equation*}\label{eq:rho-cr}
\X^\rho(\fp)=\frac{\rho(p_2,p_4)}{\rho(p_2,p_3)}\cdot\frac{\rho(p_1,p_3)}{\rho(p_1,p_4)}. 
\end{equation*}
From the discussion in Section \ref{sec:cyg} it follows that the cross-ratio $\X^\rho$ remains invariant under the diagonal action of ${\rm PU}(2,1)$ in $\fC(\overline{\bH^2_\C})$. As a corollary we have that if $p_i$ lie on the same horosphere $H_u$, then $\X^\rho$ is invariant under the action of $G_u$.

For every $i,j,k,l=1,\dots,4$, such that $p_i,p_j,p_k,p_l\in \overline{\bH^2_\C}$ are pairwise disjoint, the following symmetry conditions are clearly satisfied:
\begin{enumerate}
 \item[{(S1)}] 
 \begin{eqnarray*}
 &&
 \X^\rho(p_i,p_j,p_k,p_l)=\X^\rho(p_j,p_i,p_l,p_k)=\X^\rho(p_k,p_l,p_i,p_j).
 \end{eqnarray*}
 (notice that all the above are also equal to $\X^\rho(p_l,p_k,p_j,p_i)),$
 \item[{(S2)}]
 \begin{eqnarray*}
 &&
 \X^\rho(p_i,p_j,p_k,p_l)\cdot\X^\rho(p_i,p_j,p_l,p_k)=1,
   \end{eqnarray*}
   \item [{(S3)}]
    \begin{eqnarray*}
 &&
 \X^\rho(p_i,p_j,p_k,p_l)\cdot\X^\rho(p_i,p_l,p_j,p_k)=\X^\rho(p_i,p_k,p_j,p_l).
   \end{eqnarray*}
 \end{enumerate} 
  Let now $\fp=(p_1,p_2,p_3,p_4)\in\fC(\overline{\bH^2_\C})$ and set
 $$
 \X^\rho_1(\fp)=\X^\rho(p_1,p_2,p_3,p_4),\quad \X_2^\rho(\fp)=\X^\rho(p_1,p_3,p_2,p_4).
 $$
 Then due to properties ${\rm (S1)}$, ${\rm (S2)}$ and  ${\rm (S3)}$, the cross-ratios of all possible permutations of points of $\fp$ are functions of $\X_1^\rho(\fp)$ and $\X_2^\rho(\fp)$. 

\subsection{Ptolemaean Inequality}\label{sec:ptol1}
Ptolemaean Inequality for the metric space $(\overline{\bH^2_\C},\rho)$ can be stated as follows:
\begin{thm}\label{thm:ptol-ineq}
Let $\fp=(p_1,p_2,p_3,p_4)\in\fC(\overline{\bH^2_\C})$ and consider the cross-ratios $\X_i(\fp)$, $i=1,2$. Then the following inequalities hold
\begin{equation}\label{eq:ptol-ineq}
\X_1(\fp)+\X_2(\fp)\ge 1\quad\text{and}\quad \left|\X_1(\fp)-\X_2(\fp)\right|\le 1.
\end{equation}
\end{thm}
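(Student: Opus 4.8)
The plan is to read \eqref{eq:ptol-ineq} as the cross-ratio form of the classical Ptolemaean inequalities and then to reduce those to the triangle inequality for $\rho$. Abbreviating $\rho_{ij}=\rho(p_i,p_j)$ and putting
$$
A=\rho_{13}\rho_{24},\qquad B=\rho_{12}\rho_{34},\qquad C=\rho_{14}\rho_{23},
$$
one reads off from the definition of $\X^\rho$ that $\X_1(\fp)=A/C$ and $\X_2(\fp)=B/C$. Dividing by $C$, the first inequality in \eqref{eq:ptol-ineq} is exactly $C\le A+B$, while the second is the pair $A\le B+C$ and $B\le A+C$. Hence \eqref{eq:ptol-ineq} holds for $\fp$ if and only if the three positive numbers $A,B,C$ satisfy all three triangle inequalities, i.e.\ if and only if the Ptolemaean inequality
$$
\rho(p_i,p_k)\,\rho(p_j,p_l)\le\rho(p_i,p_j)\,\rho(p_k,p_l)+\rho(p_j,p_k)\,\rho(p_l,p_i)
$$
holds for each of the three pairings of $p_1,p_2,p_3,p_4$. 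This last statement is what I would establish.

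The main idea is to exploit the ${\rm PU}(2,1)$-invariance of $\X^\rho$ to place one point at $\infty$. If some $p_i$ lies on $\partial\bH^2_\C$, I may move it to $\infty$ by an element of ${\rm PU}(2,1)$; relabelling so that $p_4=\infty$ and using the convention $(+\infty):(+\infty)=1$, the cross-ratios collapse to $\X_1=\rho_{13}/\rho_{23}$ and $\X_2=\rho_{12}/\rho_{23}$, so that \eqref{eq:ptol-ineq} becomes $\rho_{12}+\rho_{13}\ge\rho_{23}$ together with $|\rho_{13}-\rho_{12}|\le\rho_{23}$ — precisely the triangle inequality for the triple $p_1,p_2,p_3$, which is the content of the Proposition of Section~\ref{sec:cyg}. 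Coordinate-free, the same step is an inversion centred at $p_4$: when $p_4$ is a finite boundary point the map $\iota=I\circ T_{(-\zeta_4,-v_4)}$ is a Heisenberg translation followed by the inversion $I$, and the identity $\rho(I(p),I(p'))=\rho(p,p')/(\rho(p,o)\rho(o,p'))$ gives $\rho(\iota(p_i),\iota(p_j))=\rho_{ij}/(\rho_{i4}\rho_{4j})$ for $i,j\in\{1,2,3\}$; feeding $\iota(p_1),\iota(p_2),\iota(p_3)$ into the triangle inequality and clearing the denominator $\rho_{14}\rho_{24}\rho_{34}$ produces the three inequalities $A\le B+C$, $B\le A+C$, $C\le A+B$ simultaneously. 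Either way, every configuration containing a boundary point (or $\infty$) is settled.

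The main obstacle is the remaining case in which all four points are interior points of $\bH^2_\C$. Neither reduction applies there: since ${\rm PU}(2,1)$ preserves $V_-$ and $V_0$, no interior point can be sent to $\infty$ or to $o$, and so $I$ cannot be recentred at an interior point by conjugation. I expect this to be the heart of the proof. The natural way around it is to use the general form of the inversion identity (cf.\ Prop.\ 2.7 in \cite{P-S}) to obtain, for an arbitrary centre $p_4\in\overline{\bH^2_\C}$, a map $\iota_{p_4}$ satisfying $\rho(\iota_{p_4}(p_i),\iota_{p_4}(p_j))=\rho_{ij}/(\rho_{i4}\rho_{4j})$; with such a map in hand the inversion reduction of the previous paragraph carries over verbatim to interior centres. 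Absent a ready-made inversion, the fallback is a direct verification of, say, $A\le B+C$ from the explicit formula
$$
\rho(p,p')^2=\bigl|\,|\zeta-\zeta'|^2+|u-u'|-iv+iv'-2i\Im(\zeta\overline{\zeta'})\,\bigr|,
$$
in the spirit of the proof of the triangle inequality; the genuinely new feature, invisible on the boundary where all heights vanish, is the term $|u-u'|$, and controlling it is where the real work will be.
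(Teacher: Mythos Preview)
Your reformulation of \eqref{eq:ptol-ineq} as the three triangle-type inequalities among $A,B,C$ is exactly right, and your treatment of the case in which some $p_i$ lies on $\partial\bH^2_\C$ matches the paper's argument: move that point to $\infty$ by an element of ${\rm PU}(2,1)$ and invoke the triangle inequality for $\rho$.

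The genuine gap is in the interior case. You correctly observe that no element of ${\rm PU}(2,1)$ can send an interior point to the boundary, and you then propose either a generalised inversion centred at an interior point (appealing to \cite{P-S}) or a direct estimate on the $|u-u'|$ term. Neither is needed, and neither is actually carried out in your proposal. The resolution is much simpler and is the one step you are missing: the explicit formula
\[
\rho(p,p')^2=\bigl|\,|\zeta-\zeta'|^2+|u-u'|-iv+iv'-2i\Im(\zeta\overline{\zeta'})\,\bigr|
\]
depends on the heights only through $|u-u'|$. Hence the map
\[
(\zeta,v,u)\longmapsto\bigl(T_{(-\zeta_4,-v_4)}(\zeta,v),\,u-u_4\bigr),\qquad u_4=\min_i u_i,
\]
is a Cygan \emph{isometry} of $\overline{\bH^2_\C}\setminus\{\infty\}$ into itself, even though it is not induced by any element of ${\rm PU}(2,1)$. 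It sends $p_4$ to $o\in\partial\bH^2_\C$, and now your inversion argument (with $\iota=I$) applies verbatim to the translated quadruple. This is precisely the content of Lemmata~\ref{lem:1} and \ref{lem:2} in the paper.

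So the obstacle you identified is real, but its resolution is an elementary height translation rather than a new inversion or a new analytic input; once you notice that $\rho$ is invariant under $u\mapsto u-c$, the interior case collapses to the boundary case you have already handled.
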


For the proof we need the following lemmata:

\begin{lem}\label{lem:1}
If  $\fp=(p_1,p_2,p_3,p_4)\in\fC(\bH^2_\C)$,
then there exist points $p'_1,p'_2,p'_3\in\overline{\bH^2_\C}$ such that if $\fp=(p_1,p_2,p_3,p_4)$ and $\fp'=(p'_1,p'_2,p'_3,o)$, then
$$
\X_i^\rho(\fp)=\X_i^\rho(\fp'),\quad i=1,2.
$$
\end{lem}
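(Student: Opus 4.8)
The plan is to lean on the single structural fact recorded in Section~\ref{sec:cross}: the cross-ratios $\X_1^\rho$ and $\X_2^\rho$ are invariant under the diagonal action of ${\rm PU}(2,1)$ on $\fC(\overline{\bH^2_\C})$. Because applying one and the same $g\in{\rm PU}(2,1)$ to all four entries of a quadruple leaves both cross-ratios unchanged, the whole lemma reduces to producing a single element $g\in{\rm PU}(2,1)$ with $g(p_4)=o$. Granting such a $g$, one simply sets $p'_1=g(p_1)$, $p'_2=g(p_2)$, $p'_3=g(p_3)$, and then $\fp'=(p'_1,p'_2,p'_3,o)=g\cdot\fp$ satisfies $\X_i^\rho(\fp')=\X_i^\rho(\fp)$ for $i=1,2$ by invariance; the images are automatically pairwise distinct and distinct from $o$ since $g$ is a bijection of $\overline{\bH^2_\C}$.

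The concrete work is building $g$, and I would split it according to the position of $p_4$. If $p_4=\infty$, the inversion $I$ does the job outright, since $I(\infty)=o$. If $p_4=(\zeta_4,v_4)$ is a finite boundary point, then the left Heisenberg translation $T_{(-\zeta_4,-v_4)}$ sends it to $(0,0)=o$, because $(-\zeta_4,-v_4)$ is the group inverse of $(\zeta_4,v_4)$ in $\fH$. Composing a translation with $I$ thus covers every boundary location of $p_4$, and since translations and $I$ both lie in ${\rm PU}(2,1)$, the cross-ratios are preserved at each step. This disposes cleanly of the cases in which the distinguished fourth point sits on $\partial\bH^2_\C$.

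The main obstacle is the interior situation, where $p_4$ (and in fact all of $p_1,\dots,p_4$) lies in $\bH^2_\C$: then $p_4$ belongs to a horosphere $H_{u_4}$ with $u_4>0$, and by the Remark in Section~\ref{sec:horo} no element of ${\rm PU}(2,1)$ carries $H_{u_4}$ onto the boundary $H_0$, so $o$ simply cannot be reached by a single isometry. A limiting family $g_n$ with $g_n(p_4)\to o$ is of no use either, since such a sequence must diverge in ${\rm PU}(2,1)$ and would force at least two of $g_n(p_1),g_n(p_2),g_n(p_3)$ to accumulate at $o$ as well, destroying distinctness. Consequently I would abandon the hope of realising $p'_i$ as isometric images of $p_i$ and instead, after first normalising $p_4$ to the spine point $(0,0,u_4)$ by a translation, construct $p'_1,p'_2,p'_3$ directly: using the explicit formula for $\X^\rho$ with the fourth entry frozen at $o$, one has two equations for the two prescribed values $\X_1^\rho(\fp)$ and $\X_2^\rho(\fp)$, and I would solve them within the many available degrees of freedom. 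Establishing that both target values are \emph{simultaneously} realised by a genuine configuration $(p'_1,p'_2,p'_3,o)$ is the delicate point, and it is here that I expect the bulk of the effort to lie.
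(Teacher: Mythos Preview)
Your analysis of the boundary cases is fine, and you correctly identify the real obstruction in the interior case: no element of ${\rm PU}(2,1)$ can send an interior point to $o$. But your proposed remedy --- freezing the fourth entry at $o$ and then ``solving'' the two cross-ratio equations for $p'_1,p'_2,p'_3$ in the remaining degrees of freedom --- is not a proof; it is a description of a computation you have not carried out. Nothing in what you wrote shows that the system is consistent, and you yourself flag this as ``the delicate point.'' As it stands, the interior case is simply missing.

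The paper avoids this entirely by exploiting a symmetry of $\rho$ that is \emph{not} induced by ${\rm PU}(2,1)$. Look at the formula
\[
\rho\bigl((\zeta_1,v_1,u_1),(\zeta_2,v_2,u_2)\bigr)
=\bigl|\,|\zeta_1-\zeta_2|^2+|u_1-u_2|-iv_1+iv_2-2i\Im(\zeta_1\overline{\zeta_2})\,\bigr|^{1/2}:
\]
the heights enter only through $|u_1-u_2|$. Hence the map $(\zeta,v,u)\mapsto(\zeta,v,u-c)$ is a genuine isometry of the Cygan metric for any constant $c$, even though it lies outside ${\rm PU}(2,1)$. After relabelling so that $u_4=\min_i u_i$, one applies the Heisenberg translation $T_{(-\zeta_4,-v_4)}$ in the $(\zeta,v)$-coordinates together with the height shift $u\mapsto u-u_4$; this sends $p_4$ to $o$, keeps every $p'_i$ in $\overline{\bH^2_\C}$ (because $u_i-u_4\ge 0$), and preserves all six pairwise $\rho$-distances outright. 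Both cross-ratios are then equal term by term, with no equations to solve. The moral is that the invariance you want here is invariance of $\rho$ itself, not merely ${\rm PU}(2,1)$-invariance of the cross-ratio.
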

\begin{proof}
We write
$$
p_i=(\zeta_1,v_i,u_i),\quad i=1,\dots,4.
$$
With no loss of generality we may assume that $u_4=\min\{u_i,\;i=1,\dots,4\}$. Then we set
$$
p'_i=\left(T_{(-\zeta_4,-v_4)}(\zeta_i,v_i),u_i-u_4\right),\quad i=1,\dots,4.
$$
Clearly, $p'_4=o$. Now,
$$
\rho(p'_i,p'_j)=\rho(p_i,p_j),
$$
for all  $i,j=1,\dots 4$, $i\neq j$; hence the lemma is proved.
\end{proof}
\begin{lem}\label{lem:2}
If $\fp=(p_1,p_2,p_3,p_4)\in\fC(\bH^2_\C)$, 
there exists a $\fq=(p,q,r,\infty)\in\fC(\overline{\bH^2_\C})$ such that 
$$
\X_i^\rho(\fp)=\X_i^\rho(\fq),\quad i=1,2.
$$
\end{lem}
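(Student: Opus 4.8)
The plan is to reduce to a configuration whose fourth point is the origin $o$ by means of Lemma \ref{lem:1}, and then to carry $o$ to $\infty$ by the inversion $I$, exploiting the fact that $I$ belongs to ${\rm PU}(2,1)$ and hence preserves the metric cross-ratio. Concretely, I would first invoke Lemma \ref{lem:1} to obtain a quadruple $\fp'=(p_1',p_2',p_3',o)$ with the same cross-ratios $\X_i^\rho$, $i=1,2$, as $\fp$. This already performs the bulk of the normalisation, leaving only the task of moving the distinguished point $o$ to $\infty$.

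Next I would apply the inversion $I$ to each entry of $\fp'$. Since $p_1',p_2',p_3'$ are all distinct from $o$, the map $I$ is defined on them; and because $I$ is an involution of $\overline{\bH^2_\C}$ with $I(\infty)=o$, we have $I(o)=\infty$. Setting $p=I(p_1')$, $q=I(p_2')$, $r=I(p_3')$ then yields the quadruple $\fq=(p,q,r,\infty)$. As $I$ is a bijection of $\overline{\bH^2_\C}$, the four images remain pairwise distinct, so $\fq\in\fC(\overline{\bH^2_\C})$, as required by the statement.

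The decisive step is cross-ratio invariance. The group generated by orientation-preserving similarities and $I$ is isomorphic to ${\rm PU}(2,1)$, and $\X^\rho$ is invariant under the diagonal action of ${\rm PU}(2,1)$ on $\fC(\overline{\bH^2_\C})$, as recorded in Section \ref{sec:cross}. Applying this invariance to the element $I$ gives $\X_i^\rho(\fp')=\X_i^\rho(\fq)$ for $i=1,2$, and composing with Lemma \ref{lem:1} produces $\X_i^\rho(\fp)=\X_i^\rho(\fq)$, which is the claim. I do not expect a genuine obstacle here; the only delicate point is that the invariance must be used in the compactified space, so that the occurrence of $\infty$ in $\fq$ is admissible. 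This is precisely the form in which the invariance was stated, and together with the convention $(+\infty):(+\infty)=1$ it keeps $\X^\rho(\fq)$ well defined and finite.
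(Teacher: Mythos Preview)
Your proposal is correct and follows essentially the same approach as the paper: invoke Lemma~\ref{lem:1} to obtain $\fp'=(p_1',p_2',p_3',o)$ with the same cross-ratios, then apply the inversion $I$ to send $o$ to $\infty$ and use the ${\rm PU}(2,1)$-invariance of $\X^\rho$. The paper's proof is more terse, but the argument is identical.
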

\begin{proof}
Given the quadruple $\fp=(p_1,p_2,p_3,p_4)\in\fC(\bH^2_\C)$, 
we track down the quadruple $\fp'=(p_1',$ $p'_2,p'_3,o)$ which we may obtain from Lemma \ref{lem:1}. Applying the inversion $I$ to points of $\fp'$, we have a quadruple $\fq=(p,q,r,\infty)\in\fC(\overline{\bH^2_\C})$.
\end{proof}

\medskip

\noindent{Proof of Theorem \ref{thm:ptol-ineq}}
In the first place we consider quadruples of the form $\fp=(p,q,r,\infty)$ $\in\fC(\overline{\bH^2_\C})$ and we will show that Inequalities (\ref{eq:ptol-ineq}) hold for these quadruples. Notice that we do not assume any specific conditions for the points other than infinity. Now we have
$$
\X_1^\rho(\fp)=\X(p,q,r,\infty)=\frac{\rho(r,p)}{\rho(r,q)},\quad \X_2^\rho(\fp)=\X(p,r,q,\infty)=\frac{\rho(q,p)}{\rho(q,r)},
$$
and the result follows because from the triangle inequality.

Next, we consider an arbitrary $\fp=(p_1,p_2,p_3,p_4)$ $\in\fC(\overline{\bH^2_\C})$. If one or more of the points of $\fp$ lie on the boundary, then by applying a Heisenberg translation and inversion if necessary, we obtain a quadruple of the form $\fp'=(p,q,r,\infty)$ such that
$$
\X_i^\rho(\fp)=\X_i^\rho(\fq),\quad i=1,2.
$$
If none of the points of $\fp$ belong to the boundary, then from Lemma \ref{lem:2} there exists a quadruple the form $\fp'=(p,q,r,\infty)$ such that
$$
\X_i^\rho(\fp)=\X_i^\rho(\fq),\quad i=1,2.
$$
The proof is complete.
\qed

\section{Ptolemaus' Theorem}\label{sec:ptol-thm}
We prove Ptolemaus' Theorem \ref{thm:ptol-thm} in Section \ref{sec:proof-eq}. An introductory discussion of $\R$-circles is in Section \ref{sec:R}.

\subsection{$\R$-circles}\label{sec:R}
An $\R$-{\it circle $\cR$ of height $u$}  is the intersection of a Lagrangian plane with a horosphere $H_u$, $u\ge 0$. We consider two particular $\R$-circles, namely, the {\it standard $\R$-circle of height $0$} (passing through $0$ and $\infty$), 
$$
\cR_\R^0=\left\{(x,0,0)\in\fH\;|\;x\in\R\right\}.
$$
and the {\it standard $\R$-circle of height 1} (passing through $0$ and $\infty$), 
$$
\cR_\R^1=\left\{(x,0,1)\in H_1\;|\;x\in\R\right\}.
$$
Infinite $\R$-circles, that is, $\R$-circles passing through infinity are straight lines; on the other hand, finite $\R$-circles are more complicated curves, see for instance \cite{G} or \cite{P}. An $\R$-circle $R$ is homeomorphic to the unit circle $S^1$; given four distinct points $p_1$, $p_2$, $p_3$ and $p_4$ in $R$, we say that a pair of these points {\it separates} the remaining pair, if the elements of the latter lie in different components of the set comprising of $R$ minus the initial pair, e.g., $p_1,p_3$   separate the points $p_2,p_4$ if $p_2$ and $p_4$ lie in different components of $R\setminus\{p_1,p_3\}$. See also Section 2.3 in \cite{BS}.
\begin{prop}
 Any $\R$-circle $\R$ may be mapped onto the $\cR_\R^0$ or $\cR_\R^1$ by a map $g$ in a manner so that if $\fp=(p_1,p_2,p_3,p_4)\in\fC(\cR)$ and $\fp'=\left(g(p_1),g(p_2),g(p_3),g(p_4)\right)$, then
 $$
 \X^\rho(\fp)=\X(\fp').
 $$
\end{prop}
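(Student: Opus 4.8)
The plan is to exhibit $g$ as a composition of cross-ratio-preserving maps, namely elements of ${\rm PU}(2,1)$ (translations, rotations, dilations and inversion), which leave $\X^\rho$ invariant on all of $\overline{\bH^2_\C}$, together with elements of the horosphere groups $G_u$, which preserve $\X^\rho$ whenever the four points lie on a common $H_u$. Since every element of ${\rm PU}(2,1)$ lies in ${\rm U}(2,1)$ it preserves the reality of $\langle\cdot,\cdot\rangle$, and therefore carries Lagrangian planes to Lagrangian planes and hence $\R$-circles to $\R$-circles; the same holds for the $G_u$-action on the $\R$-circles of $H_u$ under the identification $H_u\cong\fH$. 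Because inversion preserves the height-zero locus while all other generators preserve horosphere heights, no such map can send a positive-height $\R$-circle to a boundary one, which is exactly why two models $\cR_\R^0$ and $\cR_\R^1$ are needed.

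First I would treat a boundary $\R$-circle $\cR\subset\fH\cup\{\infty\}$ (height $u=0$). Choosing two distinct points $p,q\in\cR$ (one of which may be $\infty$ when $\cR$ is infinite), double transitivity of ${\rm PU}(2,1)$ on $\partial\bH^2_\C$ furnishes $g_1\in{\rm PU}(2,1)$ with $g_1(p)=o$ and $g_1(q)=\infty$, so that $g_1(\cR)$ is an $\R$-circle through $o$ and $\infty$. The key computational step is to classify such circles: writing the standard lift of a point $(\zeta,v)\in\fH$ and imposing $\langle\bz,\bz'\rangle\in\R$ along the circle forces $v\equiv 0$ and shows that $g_1(\cR)$ is a Euclidean line through the origin in the slice $\{v=0\}$, that is $g_1(\cR)=\{(te^{i\theta},0,0):t\in\R\}=R_\theta(\cR_\R^0)$ for some $\theta$. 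Setting $g=R_{-\theta}\circ g_1$ gives $g(\cR)=\cR_\R^0$, and $\X^\rho(\fp)=\X^\rho(g(\fp))=\X(\fp')$ because $g\in{\rm PU}(2,1)$.

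For a positive-height $\R$-circle $\cR\subset H_u$, $u>0$, I would first apply the dilation $D_{u^{-1/2}}\in{\rm PU}(2,1)$, which maps $H_u$ onto $H_1$ and hence $\cR$ onto a height-one $\R$-circle $D_{u^{-1/2}}(\cR)\subset H_1$. Inside $H_1$ I then repeat the previous argument verbatim, now using double transitivity of $G_1$ on $H_1$ and the rotations $R^1_\theta$: I send two of its points to $(0,0,1)$ and $\infty$, invoke the same classification (under $H_1\cong\fH$) to write the resulting circle as $R^1_\theta(\cR_\R^1)$, and straighten it by $R^1_{-\theta}$. The composite $g=R^1_{-\theta}\circ h_1\circ D_{u^{-1/2}}$ satisfies $g(\cR)=\cR_\R^1$; here $D_{u^{-1/2}}$ preserves $\X^\rho$ globally, while $h_1$ and $R^1_{-\theta}$ preserve it because after the dilation all four points lie on $H_1$.

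I expect the main obstacle to be the classification step used in both cases: proving that an $\R$-circle through $o$ and $\infty$ is precisely a line $\{(te^{i\theta},0,0)\}$ and that rotations act transitively on these. This is where the Lagrangian condition must be used concretely—computing $\langle\bz,\bz'\rangle$ for the lifts of two points of the circle and determining when it is real—and one must also check that the inversion $I^1$ used inside $H_1$ is genuinely cross-ratio-preserving there (via $G_u$-invariance of $\X^\rho$ on $H_u$) even though it is not the restriction of the ambient inversion $I$. The transitivity of ${\rm PU}(2,1)$ (resp. $G_1$) on Lagrangian planes, together with the verification that $g_1$ and $h_1$ indeed carry $\R$-circles to $\R$-circles, are the remaining points to pin down.
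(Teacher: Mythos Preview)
Your proposal is correct and follows essentially the same strategy as the paper: normalise an arbitrary $\R$-circle to one of the two standard models using only transformations that preserve $\X^\rho$ (elements of ${\rm PU}(2,1)$ globally, and of $G_u$ when all four points lie on $H_u$). The only cosmetic differences are that the paper organises the cases by whether $\cR$ passes through $\infty$ rather than by height, applies the dilation after (not before) the $G_u$-normalisation, and absorbs your explicit ``classification step'' for $\R$-circles through $o$ and $\infty$ into the blanket phrase ``applying an element of $G_u$''.
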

\begin{proof}
Suppose that $\cR$ is an $\R$-circle of arbitrary height $u$. If it passes through $\infty$, then applying an element of $G_u$ we may map it on the standard $\R$-circle of height $u$. If $u\neq 0$, by applying the dilation $D_{1/u}$ we have a map from $\cR$ to $\cR_\R^1$. Suppose now that our initial $\cR$ does not pass through infinity. By applying a Heisenberg translation, we map it onto an $\R$-circle of height $u$ that passes through $(0,0,u)$. Applying inversion $I^u$ and a Heisenberg translation and a rotation $R^u$ if necessary, we map the latter onto the standard $\R$-circle of height $u$. All the above transformations preserve the metric cross-ratio $\X^\rho$. The proof is complete.    
\end{proof}

\begin{prop}\label{prop:ptol-R}
All $\R$-circles are Ptolemaean circles.
\end{prop}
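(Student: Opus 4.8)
The plan is to translate the Ptolemaean equality into a single identity between the two cross-ratios and to recognise it as the equality case of the second inequality in Theorem \ref{thm:ptol-ineq}. Writing $\fp=(p_1,p_2,p_3,p_4)$ and dividing the defining relation
$$
\rho(p_1,p_3)\rho(p_2,p_4)=\rho(p_1,p_2)\rho(p_3,p_4)+\rho(p_2,p_3)\rho(p_4,p_1)
$$
by $\rho(p_2,p_3)\rho(p_1,p_4)$, the definitions of $\X_1^\rho$ and $\X_2^\rho$ turn it into $\X_1(\fp)=\X_2(\fp)+1$, that is $\X_1(\fp)-\X_2(\fp)=1$ (understood through the conventions for $\infty$). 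Thus the proposition is equivalent to the claim that, for any four distinct points of an $\R$-circle with $p_1,p_3$ separating $p_2,p_4$, one has $\X_1-\X_2=1$, the extremal case of $|\X_1-\X_2|\le 1$.

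Next I would reduce to a standard model. By the previous proposition, an arbitrary $\R$-circle $\cR$ can be carried onto $\cR_\R^0$ or $\cR_\R^1$ by a map $g$ preserving $\X^\rho$; since $g$ restricts to a homeomorphism of the circle, it also preserves the separation relation, hence preserves both cross-ratios \emph{and} the hypothesis. On either standard circle the Cygan distance between two of its points $(x,0,u),(x',0,u)$ (with $u=0$ or $u=1$) collapses to $|x-x'|$, so these circles carry the Euclidean metric on the real parameter together with the point $\infty$. It therefore suffices to verify $\X_1-\X_2=1$ on the standard $\R$-circle.

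I would then bring the quadruple into the normal form $(p,q,r,\infty)$. Translating the chosen point $p_4$ along the real axis to the origin of the standard circle and then applying inversion $I$ (respectively $I^1$ for height $1$) sends $p_4$ to $\infty$ while keeping the other three on the circle; both maps fix the standard $\R$-circle setwise and lie in $G_0$ (respectively $G_1$), so they preserve $\X^\rho$, and a point already equal to $\infty$ is first returned to a finite position by $I$. Because these are homeomorphisms of the circle, the separation ``$p_1,p_3$ separate $p_2,p_4$'' passes to ``$p,r$ separate $q,\infty$'', which on $\R\cup\{\infty\}$ means precisely that $q$ lies between $p$ and $r$ on the real axis. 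For the normal form one computes, exactly as in the proof of Theorem \ref{thm:ptol-ineq}, that
$$
\X_1-\X_2=\frac{\rho(p,r)-\rho(p,q)}{\rho(q,r)},
$$
so $\X_1-\X_2=1$ is equivalent to the triangle equality $\rho(p,r)=\rho(p,q)+\rho(q,r)$. By Remark \ref{rem:triangle-eq} this equality holds exactly when $q$ lies between $p$ and $r$, which is the separation hypothesis; this closes the argument.

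I expect the main obstacle to be bookkeeping rather than analysis: correctly matching the combinatorial ``separation'' relation on the topological circle $\R\cup\{\infty\}$ with the metric ``betweenness'' characterisation of Remark \ref{rem:triangle-eq}, and ensuring that the reduction to $(p,q,r,\infty)$ is carried out by maps that simultaneously fix the standard $\R$-circle, preserve the cross-ratios, and respect separation — including the degenerate situations where one of the four points already equals $\infty$, or where $\infty$ must be placed in a position other than the fourth, which is absorbed using the symmetry relations (S1)--(S3). Once these identifications are in place the actual verification is immediate from the triangle-equality characterisation.
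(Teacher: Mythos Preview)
Your proposal is correct and follows essentially the same route as the paper: reduce to the standard $\R$-circle via the preceding proposition, send one of the four points to $\infty$ using the double transitivity of $G_u$, and read off $\X_1-\X_2=1$ from the resulting real-line configuration. The paper normalises with $p_1=\infty$, $p_4=(0,0,u)$ and simply computes $\X_1=x_2/(x_2-x_3)$, $\X_2=x_3/(x_2-x_3)$ directly, whereas you send $p_4$ to $\infty$ and phrase the last step as the triangle-equality case; these are cosmetic variants of the same argument (your appeal to Remark~\ref{rem:triangle-eq} is a slight over-citation, since on the real line the betweenness characterisation of equality is elementary).
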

\begin{proof}
 According to the previous proposition it suffices to show that the standard $\R$-circle of height $u$, where $u=0$ or $1$, is a Ptolemaean circle. Let $p_i$, $i=1,2,3,4$ points in the standard $\R$-circle of height $u$. We suppose first that $p_1$ and $p_3$ separate $p_2$ and $p_4$; we may assume that
$$
p_1=\infty,
\quad p_2=(x_2,0,u),\quad p_3=(x_3,0,u),\quad p_4=(0,0,u),
$$
where $x_2>x_3>0$. Let $\fp=(p_1,p_2,p_3,p_4)$; then
$$
\X_1^\rho(\fp)=\frac{x_2}{x_2-x_3},\quad \X_2^\rho(\fp)=\frac{x_3}{x_2-x_3}.
$$
Hence $\X_1^\rho(\fp)-\X_2^\rho(\fp)=1$. The cases where $p_1$ and $p_2$ separate $p_3$ and $p_4$ and $p_1$ and $p_4$ separate $p_2$ and $p_3$ are proved in an analogous manner.
\end{proof}

\subsection{Proof of Ptolemaeus' Theorem}\label{sec:proof-eq}
We now state and prove Ptolemaeus' Theorem.
\begin{thm}\label{thm:ptol-thm}
A curve $\sigma$ in $(\overline{\bH^2_\C},\rho)$ is a Ptolemaean circle if and only if is an $\R$-circle. Explicitly, let $\fp=(p_1,p_2,p_3,p_4)$ be a quadruple of pairwise distinct points lying on a Ptolemaean circle. Then
\begin{enumerate}
 \item $\X_1^{\rho}(\fp)-\X_2^{\rho}(\fp)=1$ if $p_1$ and $p_3$ separate $p_2$ and $p_4$;
 \item $\X_2^{\rho}(\fp)-\X_1^{\rho}(\fp)=1$ if  $p_1$ and $p_2$ separate $p_3$ and $p_4$;
 \item $\X_1^{\rho}(\fp)+\X_2^{\rho}(\fp)=1$ if  $p_1$ and $p_4$ separate $p_2$ and $p_3$.
\end{enumerate}
\end{thm}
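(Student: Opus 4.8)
The plan is to push everything through the cross-ratios $\X_1^\rho$ and $\X_2^\rho$, so that the three equalities (1)--(3) become algebraic restatements of Ptolemaeus' equality, and then to identify these with the borderline case of the triangle inequality recorded in Remark \ref{rem:triangle-eq}. First I would record the purely algebraic fact that, for a quadruple $\fp=(p_1,p_2,p_3,p_4)$, dividing the relation $\rho(p_1,p_3)\rho(p_2,p_4)=\rho(p_1,p_2)\rho(p_3,p_4)+\rho(p_2,p_3)\rho(p_4,p_1)$ by $\rho(p_2,p_3)\rho(p_1,p_4)$ turns it into $\X_1^\rho(\fp)=\X_2^\rho(\fp)+1$; the analogous divisions for the other two pairings yield $\X_2^\rho-\X_1^\rho=1$ and $\X_1^\rho+\X_2^\rho=1$. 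Thus the three separation cases of the equality are exactly (1)--(3), and together with Proposition \ref{prop:ptol-R} (and its two routine variants for the remaining separations) this already gives the implication ``$\R$-circle $\Rightarrow$ Ptolemaean circle'' together with the explicit formulas. The content of the theorem is therefore the converse.

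For the converse I would use that, by Lemma \ref{lem:2} and the invariance of the cross-ratio, any quadruple drawn from $\sigma$ can be carried to one of the form $(p,q,r,\infty)$ without changing $\X_1^\rho,\X_2^\rho$. On such a quadruple $\X_1^\rho=\rho(r,p)/\rho(r,q)$ and $\X_2^\rho=\rho(q,p)/\rho(q,r)$, so the relevant equality among (1)--(3) collapses to a single triangle equality, say $\rho(p,r)=\rho(p,q)+\rho(q,r)$. This is precisely the borderline case of the triangle inequality, and here Remark \ref{rem:triangle-eq} is the engine: after a Heisenberg translation placing the middle point at the origin of its horosphere, it forces $p,q,r$ to share a common height, to have vanishing vertical coordinate, and to have real, oppositely signed horizontal coordinates --- that is, to lie on a standard $\R$-circle. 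Undoing the reduction then places the original quadruple on an $\R$-circle.

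To globalise I would first send a point of $\sigma$ of minimal height to $\infty$ (a Heisenberg translation and the height-translation of Lemma \ref{lem:1}, followed by the inversion), so that $\infty\in\sigma$ and $\sigma$ stays Ptolemaean. Fixing two finite points $A,B\in\sigma$, for every further finite point $D$ the quadruple $(A,B,D,\infty)$ produces, by the previous paragraph, a triangle equality forcing $A,B,D$ to be collinear at a common height. Letting $D$ range over $\sigma$, all finite points lie on the single straight line through $A,B$, which, carrying three points of a standard $\R$-circle, is itself an infinite $\R$-circle $\ell$; since $\sigma$ is a topological circle contained in the topological circle $\ell\cup\{\infty\}$, the two coincide, and pulling back the reductions exhibits $\sigma$ as an $\R$-circle.

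I expect the main obstacle to be exactly this transfer back to $\sigma$: the reductions employed (the inversion and especially the vertical height-translation) preserve cross-ratios but are not all Cygan isometries, and the height-translation fails to preserve a general $\R$-plane. The clean way around this is to normalise $\sigma$, after sending a point to $\infty$, to the \emph{standard} $\R$-circle by an element of $G_u$; the standard $\R$-plane contains both $o$ and $\infty$ and is therefore stable under both the inversion and the vertical translation, so that the pull-back is unambiguous. Checking this stability, and making the ``fix $A,B$, vary $D$'' step rigorous (using compactness of $\sigma$ to guarantee a minimal-height point and the separation structure to select the correct equality among (1)--(3)), is where the genuine care lies.
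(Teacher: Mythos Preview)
Your reduction to a triangle equality via Remark~\ref{rem:triangle-eq} is exactly the paper's engine, and your diagnosis of the obstacle is accurate: the height translation from Lemma~\ref{lem:1} is not a Cygan similarity and does not carry general $\R$-circles to $\R$-circles, so ``undoing the reduction'' is the crux. The paper, however, handles this differently from your globalisation on the curve. It works at the level of a single quadruple $\fp$ and splits into two cases. If some $p_i$ already lies on the boundary, one sends it to $\infty$ by a Heisenberg translation and inversion (no height shift at all), and Remark~\ref{rem:triangle-eq} places the quadruple directly on a standard $\R$-circle. If no $p_i$ lies on the boundary, the paper argues by contradiction that the four points must share a horosphere $H_u$: otherwise, height-shifting the minimal $u_1$ to $0$, applying the global inversion $I$, and Heisenberg-translating produces $(\infty,q_2,q_3,(0,0,u))$, which Remark~\ref{rem:triangle-eq} forces onto the standard $\R$-circle of height $u$; tracing back, this would make the global $I$ agree with the horospherical $I^u$, which is impossible. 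Once the four points are known to lie on a common $H_u$, one uses $I^u\in G_u$ rather than $I$; this \emph{does} preserve $\R$-circles of height $u$, and there is no height translation to undo.

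Your proposed workaround does not quite close the gap. You are right that the standard $\R$-plane is stable under both the global inversion $I$ and under height translation. But the Heisenberg translation $T_{(-\zeta_P,-v_P)}$ carrying the minimal-height point to $o$, and the $G_{u'}$-normalisation you apply afterwards, have no reason to preserve the standard $\R$-plane (they do so only when $\zeta_P\in\R$ and $v_P=0$, which you cannot arrange in advance without already knowing $\sigma$ sits in a real plane). So after undoing the normalisation and the inversion you land in \emph{some} $\R$-plane, but the subsequent height shift need not keep you in one. The idea you are missing is precisely the paper's: the equality case already forces the points onto a common horosphere, and once you know this you can run the entire reduction inside $G_u$, where the troublesome height translation never appears.
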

\begin{proof}
According to Proposition \ref{prop:ptol-R} we οnly have to prove that if for a given quadruple $\fp=(p_1,p_2,$ $p_3,p_4)\in\fC(\overline{\bH^2_\C})$ we have equality in one of the Inequalities (\ref{eq:ptol-ineq}), then all points of $\fp$ lie on an $\R$-circle. For this, with no loss of generality we may suppose that the equation in question is
$$
\X^\rho_1(\fp)-\X_2^\rho(\fp)=1,
$$
and we shall distinguish two cases. First, one of the points of $\fp$ lies on the boundary and second, no point of $\fp$ lies on the boundary. In the first case, we may assume after a Heisenberg translation and inversion if necessary, that $\fp$ is the quadruple
$$
p_1=\infty, \quad p_2=(\zeta_2,v_2,u_2),\quad p_3=(\zeta_3,v_3,u_3),\quad p_4=(0,0,u_4).
$$
Then $\X^\rho_1(\fp)-\X_2^\rho(\fp)=1$ reads as
$$
\rho(p_4,p_2)=\rho(p_4,p_3)+\rho(p_3,p_2).
$$
As in Remark \ref{rem:triangle-eq}, one shows that this implies
$$
u_2=u_3=u_4=u,\quad v_2=v_3=0,\quad \zeta_2,\zeta_3\in\R,\;\zeta_2\cdot\zeta_3>0,
$$
and therefore $p_i$ belong in the standard $\R$-circle of height $u$; moreover, $p_1$ and $p_3$ separate $p_2$ and $p_4$.

In the case where no point of $\fp$ belongs to the boundary, we may normalise so that
$$
p_1=(0,0,u_1),\quad p_2=(\zeta_2,v_2,u_2),\quad
p_3=(\zeta_3,v_3,u_3),\quad p_4=(\zeta_4,v_4,u_4).
$$
In the case where $u_i=u$, $i=1,\dots,4$, that is, all $p_i$ belong to the same horosphere of height $u$, we obtain the result by applying inversion $I^u$. We now claim that the possibility that $p_i$ do not lie on the same horosphere cannot exist. Assuming the contrary, with no loss of generality we suppose that $u_1=\min\{u_i,\;i=1,\dots, 4\}$ is the strict minimum of $u_i$; i.e., there exists at least one $u_j$, $j=2,3,4$, $j\neq 1$ with $u_1<u_j$. Consider then the auxiliary points
$$
p'_1=o,\quad p'_2=(\zeta_2,v_2,u_2-u_1),\quad
p'_3=(\zeta_3,v_3,u_3-u_1),\quad p'_4=(\zeta_4,v_4,u_4-u_1).
$$
Applying inversion $I$ we obtain the points
$$
\infty,\quad I(p'_2),\quad I(p'_3),\quad I(p'_4).
$$
If $I(p'_4)=(\zeta,v,u)$, applying the Heisenberg translation $T_{(-\zeta,-v)}$ we have the points
$$
\infty,\quad q_2,\quad q_3,\quad (0,0,u).
$$
Since the cross-ratios have remained invariant, we have that the latter points belong to the standard $\R$-circle of height $u$ and $q_2=(x_2,0,u)$, $q_3=(x_3,0,u)$ and  $x_2\cdot x_3>0$. Moving backwards we have
$$
I(p'_2)=(\zeta+x_2,v,u),\quad I(p'_2)=(\zeta+x_3,v,u),\quad I(p'_4)=(\zeta,v,u).
$$
but this is possible only if $I=I^u$, a contradiction.
This completes the proof.
\end{proof}

\end{document}